\newcommand{\ba}{\begin{array}}
\newcommand{\bb}{\begin{equation}}
\newcommand{\ea}{\end{array}}
\newcommand{\ee}{\end{equation}}
\title{A look on equations describing pseudospherical surfaces}
\authors{Igor Leite Freire}
\abstract{%
    We revisit the notion of equations describing pseudospherical surfaces, starting from the works by Sasaki, whose roots were influenced by the AKNS system, the works by Chern and Tenenblat, until current research topics in the field relating to Cauchy problems involving these equations and their geometric consequences.
    }
\keywords{Equations describing pseudospherical surfaces, sine-Gordon equation, Camassa-Holm equation}
\begin{document}

\section{Introduction}

The deep relations between partial differential equations (PDEs) and geometry date back to the XIX century, when a non-linear partial differential equation was shown to be related to the Gaussian curvature of a surface. Since then, a number of non-linear equations have been proved to be of crucial relevance to better understand geometric aspects of surfaces.  

While in the XIX century the connections between PDEs and geometry was established, in the course of the XX century we witnessed a deep relation between equations coming from mathematical physics, notably from hydrodynamics, and their connections with the geometry of surfaces with constant Gaussian curvature. 

After the WW II, the KdV equation was rediscovered (for some historical information, see the books by Drazin and Johnson \cite{drazin} or Kasman \cite{kasman}), and during the 1960's it received considerable attention due to the discovery of the soliton by Zabusky and Kruskal \cite{zab}, and the techniques proposed by Gardner {\it et. al.} for solving it \cite{gardprl}. These are two of the most prominent\footnote{In order to infer the relevance of \cite{zab} and \cite{gardprl}, according to Scopus, they have received 3,340 and 3,706 citations, respectively. Data checked on December 12, 2025.} works in the field of integrable systems developed in the 60s. After these works the KdV equation attracted spotlights and its relevance was reinforced by a series of works published in the Journal of Mathematical Physics by Gardner, Miura, Kruskal and co-workers, see \cite{miura} and references therein. See also \cite{drazin,kasman}.

The works \cite{gardprl,miura,zab} were rather influential in the field of non-linear science during the late 60s and early 70s, leading to the development of a new method for finding equations sharing the nice properties of the KdV equation: in \cite{akns} the authors considered the following linear system
\bb\label{1.0.1}
\partial_x\begin{pmatrix}
v_1\\
\\
v_2
\end{pmatrix}=
\begin{pmatrix}
-i\zeta& q\\
\\
r& i\zeta
\end{pmatrix}
\begin{pmatrix}
v_1\\
\\
v_2
\end{pmatrix},
\ee
\bb\label{1.0.2}
\partial_t\begin{pmatrix}
v_1\\
\\
v_2
\end{pmatrix}=
\begin{pmatrix}
A& B\\
\\
C& D
\end{pmatrix}
\begin{pmatrix}
v_1\\
\\
v_2
\end{pmatrix},
\ee
where, $q$ and $r$ are functions of $(x,t)$ whereas $A$, $B$, $C$ and $D$ depend on $x,\,t$ and $\zeta$, which may eventually depend on $t$. The field variables $x$ and $t$ can be regarded as space and time, respectively.

Equation \eqref{1.0.1} is a scattering problem while \eqref{1.0.2} is an evolution problem. From the cross differentiation and the integrability condition
$$
\partial_t\partial_x\begin{pmatrix}
v_1\\
\\
v_2
\end{pmatrix}=\partial_x\partial_t\begin{pmatrix}
v_1\\
\\
v_2
\end{pmatrix},
$$
\eqref{1.0.1} and \eqref{1.0.2} give
$$
\begin{pmatrix}
\partial_x A& \partial_x B\\[2pt]  
\partial_x C& \partial_x D
\end{pmatrix}-
\begin{pmatrix}
0& \partial_t q\\[2pt]
\partial_t r& 0
\end{pmatrix}
= i\partial_t\zeta
\begin{pmatrix}
-1& 0\\[2pt]
0& 1
\end{pmatrix}
+
\begin{pmatrix}
qC-rB& q(D-A)-2i\zeta B\\[2pt]
r(A-D)+2i\zeta C& rB-qC
\end{pmatrix}.
$$

The $2\times2$ matrix in \eqref{1.0.1} is traceless, as are all but the first in the above equation. If we impose that the second matrix in \eqref{1.0.2} be traceless, then all matrices we have so far will share this property. Without loss of generality, we may assume $D=-A$.

In addition, if we assume that the parameter $\zeta$ is isospectral, that is, it is independent of time, then we get the system of PDEs
\bb\label{1.0.3}
\left\{\ba{lcl}
\partial_x A&=&qC-rB,\\
\\
\partial_xB+2i\zeta B&=&\partial_tq-2Aq,\\
\\
\partial_xC-2i\zeta C&=&\partial_tr+2Ar.
\ea
\right.
\ee

The reader may be wondering: {\it What is the relevance of \eqref{1.0.3}?} This is a very good question.

In \cite{akns} it was shown that a number of very important models, such as the KdV and the sine-Gordon equations, can be obtained from \eqref{1.0.3} depending on the choices of the involved functions. Moreover, equations arising from the compatibility condition \eqref{1.0.3} are said to be integrable ({\it à la AKNS}).

Let us give an example illustrating how an equation can be obtained from \eqref{1.0.3}. 

\begin{example}
    Consider the choice
    $$
    A=\frac{i\cos{u}}{4},\,\,r=-q=\frac{u_x}{2},\,\,B=C=\frac{i\sin{u}}{4\zeta}.
    $$

    Then \eqref{1.0.3} is equivalent to the sine-Gordon (sG) equation
    \bb\label{1.0.4}
    u_{tx}=\sin{u}.
    \ee
\end{example}

It is worth mentioning that the sG equation had been discovered more than a century before \cite{akns} brought it to light, in a purely geometric context. In fact, the sG plays a vital role in the context of the geometry of surfaces with constant and negative Gaussian curvature \cite[Chapter 1]{rogers} and from its solutions it is possible to construct surfaces like those shown in Figure \ref{fig1}.

\begin{figure}[ht]
	\centering
	\begin{subfigure}{0.4\linewidth}
		\includegraphics[width=0.65\linewidth]{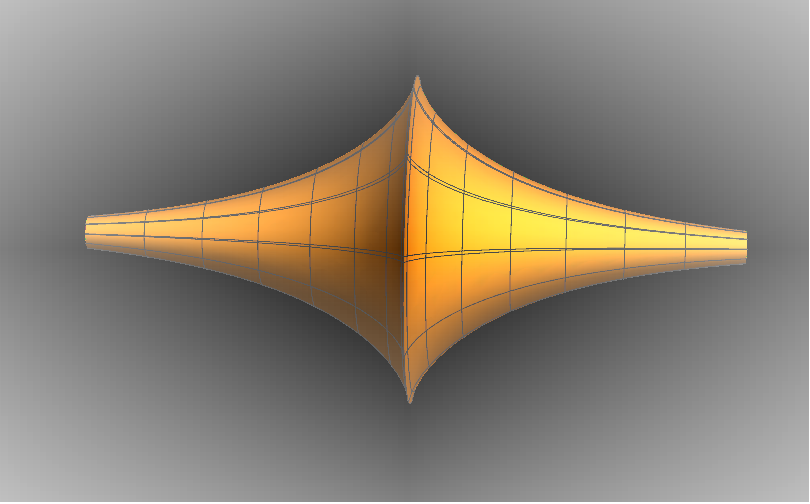}
		\label{subfig 6.3.1.a}
	\end{subfigure}
     \begin{subfigure}{0.45\linewidth}
        \includegraphics[width=0.65\linewidth]{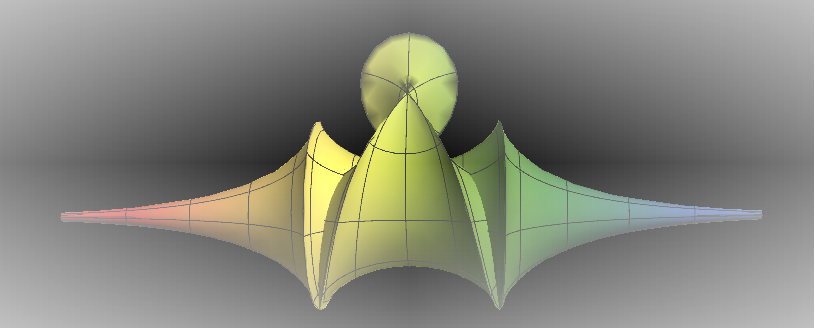}
		\label{subfig 6.3.1 c}
	\end{subfigure} 
	\begin{subfigure}{0.45\linewidth}
		\includegraphics[width=.65\linewidth]{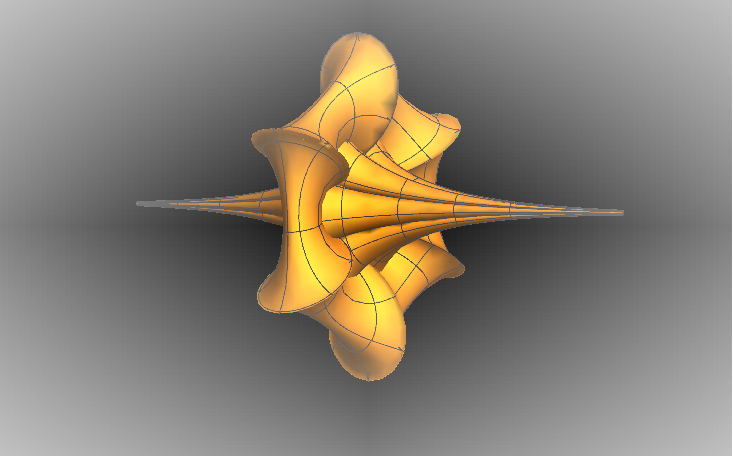}
		\label{subfig 6.3.1 b}
	\end{subfigure}
    \caption{Surfaces of Gaussian curvature ${\cal K}=-1$. In \cite[Chapter 1]{rogers} it is shown how the above surfaces can be obtained from the solutions of the sine-Gordon equation \eqref{1.0.4}.}\label{fig1}
\end{figure}

Let us now introduce a geometric view on \eqref{1.0.1}--\eqref{1.0.2}. By $\{dx,dt\}$ we denote the usual dual basis of $\mathbb{R}^2$. Consider eight one-forms $\Omega_{ij},\,\Psi_{ij}$, $1\leq i,j\leq 2$ and define the $2\times 2$ one-form matrices $\Omega=(\Omega_{ij})$, $\Psi=(\Psi_{ij})$. If $d$ denotes the usual exterior derivative, then we have
$$
d\Omega=(d\Omega_{ij}),
$$
and we can define the product of two one-form matrices in a natural way by putting
$$
\Omega\wedge\Psi=\Big(\sum_{k=1}^2\Omega_{ik}\wedge\Psi_{kj}\Big).
$$

All that said, let us consider the following one-form matrix
\bb\label{1.0.7}
\Omega=
\begin{pmatrix}
-i\zeta dx+Adt& qdx+Bdt\\
\\
rdx+Cdt& i\zeta dx-Adt
\end{pmatrix}
\ee
and consider
\bb\label{1.0.8}
dv=\Omega v,
\ee
where $v=(v_1\quad v_2)^t$ (the superscript $t$ means the usual matrix transposition). From \eqref{1.0.7}--\eqref{1.0.8} we obtain
$$
0=d(dv)=d(\Omega v)=(d\Omega-\Omega\wedge\Omega)v,
$$
meaning that
\bb\label{1.0.9}
d\Omega=\Omega\wedge\Omega.
\ee

If we now define two other matrices $X$ and $T$, such that
$$
\Omega=Xdx+Tdt,
$$
then \eqref{1.0.9} gives the equation
\bb\label{1.0.10}
\partial_tX+\partial_xT+[X,T]=0.
\ee

Equation \eqref{1.0.10} is better known as the {\it zero curvature representation}, and it reveals to us that \eqref{1.0.1}--\eqref{1.0.2} can be seen as a geometric object.

Equations \eqref{1.0.8} is a different, but equivalent, way to write \eqref{1.0.1}--\eqref{1.0.2}, whereas \eqref{1.0.9} implies to \eqref{1.0.3}.

The remaining part of the paper is as follows: in Section 2, we revisit the main aspects of the works by Sasaki \cite{sasaki} and Chern and Tenenblat \cite{chern}, connecting equations obtained from the AKNS method and the geometry of abstract 2-manifolds with constant and negative Gaussian curvature. In Section 3, we discuss some recent results from the author \cite{nilay,freire-tito-sam,freire-ch,freire-dp,nazime} that connect the theory developed by Chern and Tenenblat with problems concerning not only equations, but equations subject to some condition (initial data). In some parts, the presentation may closely follow the original references.

\section{Equations describing pseudospherical surfaces}

The study of relations between solutions of equations and the curvature of surfaces dates back to 1862, when, according to \cite[Introduction]{rogers}, Edmond Bour discovered \eqref{1.0.4}. Soon after, Bonnet and Enneper independently rediscovered the same equation in 1867 and 1868, respectively.

Although the works by Bour, Bonnet, and Enneper only considered the sine-Gordon equation, they can be seen as the light dawn of what today is better known as equations describing pseudospherical surfaces or PSS equations (henceforth abbreviated as PSS, also used in the plural).

After the mentioned works, Sasaki's observation, connecting the AKNS system \cite{akns} with the structure equations for a pseudospherical surface, and later the fundamental work by Chern and Tenenblat, where the concept of PSS equations \cite{chern} was established, made connections between certain partial differential equations and smooth two-dimensional Riemannian manifolds of constant Gaussian curvature. Their contribution gained particular significance following a notable insight by Sasaki, who had earlier demonstrated that solutions to specific integrable equations—those arising from the AKNS formalism \cite{akns}—are intimately related to the intrinsic geometry of surfaces with constant negative Gaussian curvature ${\cal K} = -1$ \cite{sasaki}.

By definition, a PSS is a two-dimensional Riemannian manifold whose Gaussian curvature remains constant and negative. According to Hilbert’s theorem \cite[Section 5-11]{carmo}, such manifolds cannot be isometrically immersed in $\mathbb{R}^3$ as complete surfaces, due to their negative curvature—see also \cite[page 439]{neil} and \cite{milnor}. For this reason, they are frequently referred to as {\it abstract} surfaces in the literature.

Once these notions are introduced, let us precisely give meaning to them {\it à la Chern and Tenenblat} \cite{chern}. Let $(x,t)$ be independent variables. A differential equation for a real-valued function $u=u(x,t)$ of order $n$ is generically denoted by 
\bb\label{2.0.1}
{\cal E}(x,t,u,u_{(1)},\cdots,u_{(n)})=0.
\ee

\begin{definition}\label{def2.1}
A differential equation \eqref{2.0.1} is said to describe pseudospherical surfaces, or it is said to be of pseudospherical type, if it is a necessary and sufficient condition for the existence of differentiable functions $f_{ij}$, $1\leq i,j\leq 3$, such that the forms 
\bb\label{2.0.2}
\omega_i=f_{i1}dx+f_{i2}dt,\quad 1\leq i\leq 3,
\ee
satisfy the structure equations of a pseudospherical surface 
\bb\label{2.0.3}
d\omega_1=\omega_3\wedge\omega_2,\quad d\omega_2=\omega_1\wedge\omega_3,\quad d\omega_3=\omega_1\wedge\omega_2.
\ee
\end{definition}

Sasaki noticed in his work \cite{sasaki} that by defining one-forms $\omega_1$, $\omega_2$ and $\omega_3$ suitably from \eqref{1.0.1} and \eqref{1.0.2}, one could obtain the one-form matrix
\bb\label{2.0.4}
\Omega=\frac{1}{2}\begin{pmatrix}
\omega_2 & \omega_1-\omega_3 \\
\omega_1+\omega_3 & -\omega_2
\end{pmatrix},
\ee
and we can define
\bb\label{2.0.5}
\Sigma:=d\Omega-\Omega\wedge\Omega.
\ee

Then $\Sigma=0$ if and only if \eqref{2.0.3} holds. As a result, if \eqref{2.0.1} is the compatibility condition of \eqref{1.0.3}, then its solutions satisfy \eqref{2.0.3}, that is, the structure equations for a two-dimensional Riemannian manifold with first fundamental form, or metric, given by
\bb
I=\omega_1^2+\omega_2^2.
\ee

These foundational results laid the groundwork for a new intersection between surface theory and the analysis of PDEs. Since then, this perspective has shaped a substantial body of literature, including \cite{beals,keti2015,tarcisio,cat,nilay,freire-tito-sam,reyes2000,reyes2002,reyes2006-sel,reyes2006-jde,reyes2011,nazime}, among many others. A summary of Tenenblat's works can be found in \cite[Chapter 1]{keti-book}, whereas \cite{reyes2000} deals with geometric aspects of integrability. A nice survey of PSS equations can be found in the paper by Reyes \cite{reyes2011}.

At least in its roots, the works after \cite{chern} dealing with PSS equations were mostly concerned with integrable equations in the AKNS sense. As such, it was quite natural the presence of a fundamental parameter (in the sense it could not be removed after a Gauge transformation) in the one-forms \eqref{2.0.2}.

From the point of view of AKNS or Lax integrability (see \cite{drazin} for further details), the existence of such a parameter, known as the {\it spectral parameter} due to the fact that it appears in an eigenvalue problem, is somewhat related to the existence of infinitely many linearly independent integrals of motion or conserved quantities. This fact was first shown for the KdV by Miura, Gardner and Kruskal \cite{miura} and then explained in terms of a pair of operators by Lax \cite{lax} and named after him.

From a geometric perspective, the existence of the spectral parameter in the one-forms determined by the solutions of PSS equations implies the existence of a one-parameter family of PSS. This fact leads us to the notion of {\it geometrically integrable equations}, that are those equations whose solutions describe non-trivial one-parameter families of surfaces. To the best of my knowledge, this characterization was first stated by Reyes, see \cite[Definition 2]{reyes2000}, although it dates back to the original work by Chern and Tenenblat, see the comment after Definition 2 in \cite{reyes2000}.

 Despite its importance in the field of integrable systems, soon after the works \cite{sasaki,chern}, the spectral parameter lost importance and the investigation of PSS equations shifted to problems of diverse nature over time, evolving to classification problems and more geometric questions, such as the problems of how immersion of surfaces may depend on jets of finite order, e.g, see \cite{kah-book, kah-cag, kah}.

In practical terms, given a triad of one-forms $\omega_1$, $\omega_2$ and $\omega_3$, and an equation \eqref{2.0.1}, we can check if they describe a PSS surface in the following way: let us define a matrix of one-forms $\Omega$ by \eqref{2.0.4}. If, when restricted to the manifold determined by the  solutions\footnote{The one-forms involved are, in fact, pullbacks of the one-forms by holonomic sections of jets. For a more geometrically rigorous formulation of these concepts, see \cite[Section II]{reyes2000jmp}, see also Remark 3.1.} of \eqref{2.0.1}, the matrix $\Sigma$ vanishes, we say that \eqref{2.0.1} is a PSS equation, and the triad $\{\omega_1,\, \omega_2,\, \omega_3\}$ satisfies the structure equations \eqref{2.0.3} of a PSS with Gaussian curvature ${\cal K}=-1$. As a result, any open, simply connected set $U$ where the one-forms $\omega_1$ and $\omega_2$ are defined and $\omega_1\wedge\omega_2\neq0$ everywhere, is endowed with a PSS structure. We can naively see this condition as a requirement for $\{\omega_1,\omega_2\}$ to be a linearly independent set and thus be the basis of a plane.

\begin{definition}\label{def2.2}
   Assume that \eqref{2.0.1} is an equation describing a PSS and let $u$ be a solution of it. We say that $u$ is a generic solution if the one-forms \eqref{2.0.2} satisfy the condition $\omega_1\wedge\omega_2\neq0$.
\end{definition}

\begin{example}
Let us consider the following one-forms
\bb\label{2.0.7}
\left\{
\ba{lcl}
\omega_1&=&{\frac{1}{\eta}\sin{u}dt,}\\
\\
\omega_2&=&{\eta dx+\frac{1}{\eta}\cos{u}dt},\\
\\
\omega_3&=& u_xdx.
\ea
\right.
\ee

Then we have
$$
\left\{
\ba{lcl}
d\omega_1-\omega_3\wedge\omega_2&\equiv&{0}\\
\\
d\omega_2-\omega_1\wedge\omega_3&\equiv&{0},\\
\\
d\omega_3-\omega_1\wedge\omega_2&=&-(u_{tx}-\sin{u})dx\wedge dt.
\ea
\right.
$$
Thus, on the solutions of the sG equation \eqref{1.0.4}, the forms \eqref{2.0.7} satisfy \eqref{2.0.3}. Since 
$$
\omega_1\wedge\omega_2=-\sin{u}\,dx\wedge dt,
$$
then $u$ will be a generic solution for \eqref{1.0.4} provided that $u\notin\{k\pi,\,\,\,k\in\mathbb{Z}\}=:P$. In particular, if $u$ is a non-constant solution, then it is non-generic, and its domain is ensured with a PSS structure in a simply connected neighborhood of $(x,t)$ whenever $u(x,t)\notin P$.

The generic solutions provide the first fundamental form
$$
I=\eta^2dx^2+2\cos{u}dxdt+\frac{1}{\eta^2}dt^2.
$$
\end{example}

\begin{example}\label{ex2.2}
     Let $m=u-u_{xx}$, $\lambda\in\mathbb{R}\setminus\{0\}$ and consider the one-forms
\bb\label{2.0.8}
\ba{lcl}
\omega_1&=&{\Big(\frac{\lambda}{2}+\frac{1}{2\lambda}-m\Big)dx+\Big(um+\frac{\lambda}{2}u-\frac{u}{2\lambda}-\frac{1}{2}-\frac{\lambda^2}{2}\Big)dt},\\
\\
\omega_2&=&-u_xdt,\\
\\
\omega_3&=&{\Big(m+\frac{1}{2\lambda}-\frac{\lambda}{2}\Big)dx+\Big(\frac{\lambda^2}{2}-\frac{1}{2}-\frac{u}{2\lambda}-\frac{\lambda}{2}u-um\Big)dt}.
\ea
\ee
A straightforward calculation shows that
\bb\label{2.0.9}
\ba{lcl}
d\omega_1-\omega_3\wedge\omega_2&=&\Big(u_t-u_{txx}+3uu_x-2u_xu_{xx}-uu_{xxx}\Big)dx\wedge dt,\\
\\
d\omega_2-\omega_1\wedge\omega_3&=&0,\\
\\
d\omega_3-\omega_1\wedge\omega_2&=&-\Big(u_t-u_{txx}+3uu_x-2u_xu_{xx}-uu_{xxx}\Big)dx\wedge dt,
\ea
\ee
meaning that on the generic solutions of the CH equation 
   \bb\label{1.0.5}
    u_t-u_{txx}+3uu_x=2u_xu_{xx}+uu_{xxx},
    \ee
    or its equivalent form
    \bb\label{1.0.6}
    m_t+2um_x+u_xm=0,
    \ee
the one-forms \eqref{2.0.8} define a two-dimensional Riemannian manifold with first fundamental form
$$
\ba{lcl}
I&=&{\Big[m^2-\Big(\lambda+\frac{1}{\lambda}\Big)m+\frac{1}{4}\Big(\lambda+\frac{1}{\lambda}\Big)^2\Big]dx^2}+\\
\\
&&{\Big[-um^2+\frac{1}{\lambda}um+\frac{1}{4}\Big(\lambda^2-\frac{1}{\lambda^2}\Big)u+\lambda\Big(\frac{\lambda}{2}+\frac{1}{2\lambda}\Big)m-\lambda\Big(\frac{\lambda}{2}+\frac{1}{2\lambda}\Big)^2\Big]dxdt}+\\
\\
&&{\Big[um^2+u^2m\Big(\lambda-\frac{1}{\lambda}\Big)+u_x^2-\lambda\Big(\lambda+\frac{1}{\lambda}\Big)um+\frac{1}{4}\Big(\lambda+\frac{1}{\lambda}\Big)^2u^2-\frac{\lambda}{2}\Big(\lambda+\frac{1}{\lambda}\Big)^2}\\
\\
&&{+\frac{\lambda^2}{4}\Big(\lambda+\frac{1}{\lambda}\Big)^2\Big]dt^2}.
\ea
$$

Let us investigate the generic solutions of the CH equation. It is straightforward to see that
   $$
    \omega_1\wedge\omega_2=-\Big(\frac{\lambda}{2}+\frac{1}{2\lambda}-m\Big)u_x dx\wedge dt.
   $$
    Provided that $u$ is a solution of the CH equation, then $\omega_1\wedge\omega_2=0$ somewhere if and only if 
    \bb\label{2.0.10}
    m=\frac{\lambda}{2}+\frac{1}{2\lambda}
    \ee
    or
    \bb\label{2.0.11}
    u_x=0.
    \ee
\end{example}

We close the present section with some remarks.

\begin{remark}[Non-uniqueness of the one-forms]
It is possible that a PSS equation be the compatibility condition for more than one triad of one forms satisfying the structure equations \eqref{2.0.3}. To illustrate this observation, consider the one-forms
$$
\ba{lcl}
\omega_1&=&{\Big(u_{xx}-u-\beta+\frac{\beta}{\eta^2}-\frac{1}{\eta^2}\Big)dx}\\
\\
&&+{\Big(-\frac{\beta}{\eta}u_x-\frac{\beta}{\eta^2}+u^2-1+\beta u+\frac{u_x}{\eta}+\frac{1}{\eta^2}-uu_{xx}\Big)dt}\\
\\
\omega_2&=&{\eta dx+\Big(-\frac{\beta}{\eta}-\eta u+\frac{1}{\eta}+u_x\Big)dt},\\
\\
\omega_3&=&{\Big(u_{xx}-u+1\Big)dx+\Big(\frac{\beta}{\eta^2}u+u^2=uu_{xx}+\frac{1}{\eta^2}+\frac{u_x}{\eta}-\frac{u}{\eta^2}-u-\frac{\beta}{\eta^2}-\frac{\beta}{\eta}u_x\Big)dt},
\ea
$$
where $\eta^4-\eta^2+\beta^2\eta^2=\beta^2+1-2\beta$. In \cite[Theorem 1]{reyes2002} it was shown that if we substitute the one-forms above into \eqref{2.0.3}, we then obtain the CH equation \eqref{1.0.5} as the compatibility condition. Compare with the representation in  Example \ref{ex2.2}.
\end{remark}

\begin{remark}[{Geometric integrability and AKNS integrability are not equivalent}]
The theory of PSS equations, even though purely geometric, was born in the context of integrable systems. That is why the earlier papers in the area were concerned with equations that are integrable. The intense research carried out by K. Tenenblat and her co-workers shows that the land of PSS equations is larger than that of integrable equations in the AKNS sense. 

It is also worth mentioning that even though geometric integrability and AKNS integrability are related and shared by a number of equations, they are not equivalent. The Cavalcanti and Tenenblat equation
$$
u_t=(u_x^{-1/2})_{xx}+u_x^{3/2},
$$
discovered by Cavalcanti and Tenenblat \cite{cat}, is the compatibility condition for the one-forms
$$
\ba{lcl}
\omega_1&=&\eta\sinh{u}\,dx+\eta\Big((u_x ^{-1/2})_x\cosh{u}+(u_x^{1/2}-\eta u_x^{-1/2})\sinh{u}\Big)dt,\\
\\
\omega_2&=&\eta dx-\eta^2u_x^{-1/2}dt,\\
\\
\omega_3&=&\eta\cosh{u}\,dx+\eta\Big((u_x ^{-1/2})_x\sinh{u}+(u_x^{1/2}-\eta u_x^{-1/2})\cosh{u}\Big)dt,
\ea
$$
satisfies the structure equations \eqref{2.0.3}, but it is not of the AKNS type \cite[page 79]{reyes2000}.
\end{remark}

\begin{remark}[{An equation describing pseudospherical surfaces is not necessarily geometrically integrable}]
 If an equation is geometrically integrable, then it describes one-parameter families of PSS. This is due to the presence of a (at least one) parameter in the one-forms that cannot be removed by a gauge transformation $X\mapsto SXS^{-1}$, $ T\mapsto STS^{-1}$, where $X$ and $T$ satisfy the ZCR \eqref{1.0.10}. We observe that such a gauge transformation would change the corresponding one-forms but does not affect geometric integrability.

Let us consider the one-forms (see \cite[Example 2.8]{tarcisio})
\bb\label{2.12}
\ba{lcl}
\omega_1&=&(u-u_{xx})dx+(u_x^2-2uu_x+uu_{xx})dt,\\
\\
\omega_2&=&\Big(\mu(u-u_{xx})\pm 2\sqrt{1+\mu^2}\Big)dx+\mu(u_x^2-2uu_x+uu_{xx})dt,\\
\\
\omega_3&=&\Big(\pm\sqrt{1+\mu^2}(u-u_{xx})+2\mu\Big)dx\pm\sqrt{1+\mu^2}(u_x^2-2uu_x+uu_{xx})dt.
\ea
\ee
A straightforward calculation shows that
$$
\ba{lcl}
d\omega_1-\omega_3\wedge\omega_2&=&{\Big(u_t-u_{txx}+4uu_x-3u_xu_{xx}-uu_{xxx}\Big)dx\wedge dt,}\\
\\
d\omega_2-\omega_1\wedge\omega_3&=&0,\\
\\
d\omega_3-\omega_1\wedge\omega_2&=&{-\Big(u_t-u_{txx}+4uu_x-3u_xu_{xx}-uu_{xxx}\Big)dx\wedge dt}.
\ea
$$

Therefore, the one-forms \eqref{2.12} satisfy the structure equations for a PSS provided that $u$ is a solution of the Degasperis-Procesi equation
$$
u_t-u_{txx}+4uu_x=3u_xu_{xx}+uu_{xxx}.
$$

The presence of the parameter $\mu$ in \eqref{2.12} might suggest that the Degasperis-Procesi equation is geometrically integrable. However, this is not the case, at least with the forms \eqref{2.12}.

Under a gauge transformation, see \cite[Section 8]{freire-dp}, the one-forms \eqref{2.12} are equivalent to
$$
\ba{lcl}
    \theta_1&=&-2dx,\\
    \\
    \theta_2&=&{\Big(1+\frac{u-u_{xx}}{2}\Big)dx+\frac{u_x^2-2uu_x+uu_{xx}}{2}dt},\\
    \\
    \theta_3&=&{\Big(1+\frac{u-u_{xx}}{2}\Big)dx+\frac{u_x^2-2uu_x+uu_{xx}}{2}dt,}
    \ea
$$
which does not depend on any parameter. It is important to mention that this fact does not {\it imply} that the Degasperis-Procesi equation is not geometrically integrable! It does imply that it is not geometrically integrable, considering \eqref{2.12}, because the parameter in those forms can be removed. A discussion about geometric integrability of the Degasperis-Procesi equation can be found in \cite[Discussion]{freire-dp}.
\end{remark}

\begin{remark}[{PSS equations are a proper category}]
 The set of PSS equations is larger than the set of integrable equations {\it à la} AKNS integrability. In fact, those PSS equations that are integrable (in any sense we may consider) represent a small, but quite important, number of what today we know to be PSS equations. This distinction makes evident that the existence of equations being the compatibility condition for the structural equations \eqref{2.0.3} reveals a universal geometric feature, whereas the integrability aspect, despite its relevance, is a specific subcase. 
 \end{remark}

\begin{remark}[{PSS from smooth solutions}]
 None of the definitions \ref{def2.1} or \ref{def2.2} made reference of the regularity of the objects involved. This is intentional: they follow the spirit of papers in geometry, where usually objects are assumed to be $C^\infty$. In particular, all solutions involved are $C^\infty$ and so are the forms \eqref{2.0.2}.
\end{remark}

\begin{remark}[{PSS equations and Cauchy problems}]
The developments in the field of PSS equations had been restricted to equations or systems of equations. Additional information, such as boundary or initial conditions, was not considered until a few years ago. This point will be better explored in the next section.
\end{remark}

\begin{remark}[{Limitations of the theory of PSS equations}]
The theory of PSS equations has traditionally been formulated under the assumption that the solutions defining the first fundamental form possess $C^\infty$ regularity. Although this smoothness requirement is not always explicitly stated, it can be traced back to the foundational work of Chern and Tenenblat \cite[page 55, second paragraph]{chern}. In contrast, the discussion in the preceding remark highlights a tension between this framework and the class of solutions considered in \cite{const1998-1,const1998-2,const2000-1}, where the smoothness assumption is incompatible with the finite regularity typically associated with Cauchy problems. Consequently, surfaces potentially generated by solutions exhibiting wave breaking remained unexplored within this geometric context until \cite{freire-ch}. 
\end{remark}

\section{Finite regularity of the metric}

In this section, we address problems concerning PSS equations and some developments made by the author in \cite{nilay,freire-tito-sam,freire-ch,freire-dp,freire-arxiv,nazime}. The CH equation was the main motivation for such improvements, and thus, parts of the text are now more focused on its literature. For a review on some aspects of the CH equation, see \cite{freire-cm}.

A smooth solution of a PSS equation yields smooth one-forms $\omega_1, \omega_2, \omega_3$, and consequently, the associated first fundamental form inherits the same degree of regularity. In contrast, the solutions considered by Constantin \cite{const2000-1} are not necessarily $C^\infty$, which marks a significant distinction from works such as \cite{tarcisio,keti2015,reyes2000,reyes2002,reyes2006-sel,reyes2006-jde,reyes2011,sasaki}, where smoothness is a standing assumption.

In the literature on PDEs and PSS structures, the uniqueness of solutions is not often addressed. Consequently, whether a specific first fundamental form may correspond to one or multiple solutions of PSS equations remains, overall, an open topic.

This distinction motivates a deeper investigation into the relationship between certain regular curves and the graphs of solutions of the CH equation. More precisely, one may ask under which conditions a regular curve $\gamma(x)=(x,0,u_0(x))$ in space can uniquely determine such a graph, together with a compatible set of one-forms. In certain cases, uniqueness can indeed be established. This situation essentially corresponds to solving the CH equation with prescribed initial data $u_0$, which naturally determines a curve of the form $(x,0,u_0(x))$. However, the requirement of uniqueness imposes constraints on the class of curves of this type that can be considered.

As previously noted, solutions to Cauchy problems for the CH equation do not always exhibit smoothness \cite{const1998-1,const1998-2,const2000-1}, that is, their solutions may not be $C^\infty$. Some issues naturally arise from this observation:
\begin{itemize}
    \item are definitions \ref{def2.2} and \ref{def2.1} still valid in the absence of $C^\infty$ regularity?
    \item if the answer is positive, what is the minimal regularity necessary to ensure the well-definition of the associated one-forms?
\end{itemize}

In light of the preceding comments, from a geometric perspective, PSS equations are not usually dealt with other conditions or restrictions, such as initial conditions. Exceptions are rare, and to the best of the author's knowledge, \cite{nazime} stands out as a first example of the Cauchy problem being considered in conjunction with the geometry of PSS equations. There, PSS surfaces derived from solutions to a specific Cauchy problem were considered, but the solutions in question were analytic ($C^\omega$), and hence smooth. Nevertheless, \cite{nazime} also demonstrated the existence of a strip in which the one-forms are defined and the condition $\omega_1 \wedge \omega_2 \neq 0$ is satisfied.

It is worth recalling that, for a solution to define a first fundamental form of a surface, the associated one-forms must be linearly independent over simply connected domains in the plane \cite[Theorem 4.39]{cle}. This requirement prompts further examination of how the geometry of the solution's graph evolves, particularly with respect to the domain where a PSS structure can be meaningfully defined.

Despite the considerations in \cite{nazime}, the solutions considered therein were still $C^\infty$, so that the framework proposed by Tenenblat and co-workers over the years was still valid. The situation, however, is significantly different when solutions of the CH equation are taken into consideration.

Cauchy problems involving the CH equation are usually established with finite regularity, since they are $C^1$ in time \cite{const1998-1,const1998-2,const2000-1, CE}. Moreover, solutions of the CH equation may develop singularities in finite time. For example, Constantin and Escher \cite[Example 4.3]{const1998-2} demonstrated that even smooth initial data may lead to wave breaking in finite time. The formation of such singularities is typically expressed through the blow-up of the slope of the solution. The presence of finite regularity certainly brings some difficulties to Definition \ref{def2.1}, but with Definition \ref{def2.2} the issue becomes dramatic, since it is not clear if we still have an intrinsic geometry defined by \eqref{2.0.2}--\eqref{2.0.3} whenever the solution of the equation has finite regularity. 

The initial data (which geometrically gives a curve) provides the regularity of the solution because it determines the space function where the solutions belong to. This is precisely the source of the problems mentioned above: the space where a given solution is considered. From a geometric perspective, provided that the one-forms \eqref{2.0.2} are $C^1$ (see \cite[Theorem 10-19, page 232]{gug} and also \cite[Theorem 10-18, page 232]{gug}), we \textit{do} have an abstract surface defined. Therefore, as long as we have a solution belonging to a function space respecting this constraint, the theory developed so far continues to work.

With all of this in mind, we have a clue to modify Definition \ref{def2.1} and make it compatible when we are out of $C^\infty$ structures: the definition should take into account {\it where} the solution is. This is, in fact, done in its original assumption, since it implicitly assumes $C^\infty$ solutions. If we allow other solutions, we can then bring modifications in the notion of PSS equations and generic solutions, e.g., see \cite{freire-ch,freire-dp}.

\begin{definition}[{$C^k$ PSS modelled by ${\cal B}$ and ${\cal B}-$PSS equation, see \cite[Definition 2.1]{freire-ch}}]\label{def3.1}
Let ${\cal B}\subseteq C^k$ be a function space. A differential equation \eqref{2.0.1} for a dependent variable (function) $u\in{\cal B}$ is said to describe a pseudospherical surface of Gaussian curvature ${\cal K}=-1$ and class $C^k$ modelled by ${\cal B}$, $k\in\mathbb{N}$, or it is said to be of ${\cal B}-$pseudospherical type (${\cal B}$-PSS equation, for short), if it is a necessary and sufficient condition for the existence of functions $f_{ij}=f_{ij}(x,t,u,u_{(1)},\cdots,u_{(n)})$, $1\leq i\leq 3,\,\,1\leq j\leq 2$, depending on $u$ and its derivatives up to a finite order $n$, such that:
\begin{itemize}
    \item the functions $f_{ij}$ are $C^k$ with respect to their arguments;
    \item the forms \eqref{2.0.2} satisfy the structure equations \eqref{2.0.3} of a pseudospherical surface of Gaussian curvature ${\cal K}=-1$;
    \item $\omega_1\wedge\omega_2\neq0$.
\end{itemize}
\end{definition}

If the function space is the space of $C^\infty$ functions, or it is not exactly or clearly defined, and no confusion is possible, we then say PSS equation instead of ${\cal B}-$PSS equation.

\begin{remark}
The one-forms involved in Definition \ref{def3.1} are pullbacks of differential forms on jet spaces; see \cite[Section II]{reyes2000jmp}.
\end{remark}

\begin{remark}
One might wonder whether the order of the derivatives appearing in the functions $f_{ij}$ in Definition \ref{def3.1} is related to the order of the equation. Most commonly, they are of a smaller order. However, this is not a rule. For example, let us consider the one-forms
$$
\ba{lcl}
\omega_1&=&-2dx,\\
\\
\omega_2=\omega_3&=&{\Big(1-2u+u_x+2u_{xx}-u_{xxx}\Big)dx}\\
\\
&+&{\Big(16u_x^2-16uu_x+16uu_{xx}-16u_xu_{xx}-4uu_{xxx}+2u_{xx}^2+2u_xu_{xxx}\Big)dt},
\ea
$$
which were reported in \cite[Theorem 2.1]{raspa-jde}.

Their coefficients depend on up to third-order derivatives, and a lengthy calculation shows that
$$
\ba{lcl}
  d\omega_1&-&\omega_3\wedge\omega_2\equiv0,\\
    d\omega_2&-&\omega_1\wedge\omega_3=d\omega_3-\omega_1\wedge\omega_2\\
    &=&(2-\partial_x)\Big(u_t-u_{txx}-\big(16uu_x-8u_xu_{xx}+2u_{xx}^2-4uu_{xxx}+2u_xu_{xxx}\big)\Big)dx\wedge dt.
\ea
$$
Therefore, on the solutions of the equation
$$u_t-u_{txx}=16uu_x-8u_xu_{xx}+2u_{xx}^2-4uu_{xxx}+2u_xu_{xxx}$$
the one-forms satisfy the structure equations for a PSS.
\end{remark}

From now on, we closely follow the ideas presented in \cite{freire-ch}. Our presentation is more concerned with the geometric nature of the results and we avoid technicalities with functional analysis. However, we guide the reader for \cite{freire-ch} to clarify certain aspects on the function spaces involved, such as the Sobolev spaces $H^s(\mathbb{R})$ and other related tools.

\begin{definition}[{Generic solution, \cite[Definition 2.2]{freire-ch}}]\label{def3.2}
A function $u:U\rightarrow\mathbb{R}$ is called a \textit{generic solution} for the ${\cal B}-$PSS equation \eqref{2.0.1} provided that:
\begin{enumerate}
\item $u\in{\cal B}$;
\item it is a solution of the equation;
\item the one-forms \eqref{2.0.2} are $C^k$ on $U$;
\item there exists at least one simply connected open set $\Omega\subseteq U$ such that $\omega_1\wedge\omega_2\big|_{p}\neq0$, for each $p\in\Omega$.
\end{enumerate}
Otherwise, $u$ is said to be \textit{non-generic}.
\end{definition}

\begin{remark}
The condition $\omega_1\wedge\omega_2\big|_{p}\neq0$ in Definition \ref{def3.2} means that
$$\Big(f_{11}f_{22}-f_{12}f_{21}\Big)(p,u(p),u_{(1)}(p),\cdots,u_{(n)}(p))\neq0.$$
\end{remark}

Henceforth, we consider the results in Example \ref{ex2.2}

\begin{lemma}\label{lema3.1}
    Assume that $u\in C^{0}(H^{4}(\mathbb{R}),[0,T))\cap C^{1}(H^{3}(\mathbb{R}),[0,T))$ is a non-trivial solution of the CH equation \eqref{1.0.6}. If $u_x\neq 0$ on some open set $\Omega$, then we cannot have $m=c\neq 0$, where $c$ is a constant.
\end{lemma}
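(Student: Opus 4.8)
The plan is to argue by contradiction: assume that $m\equiv c$ for some nonzero constant $c$ on the open set $\Omega$ where $u_x\neq 0$ (equivalently, throughout the domain), and derive a contradiction directly from the evolution form \eqref{1.0.6} of the CH equation.

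First I would dispatch the regularity bookkeeping, so that every term in \eqref{1.0.6} is meaningful pointwise on $\Omega$. From $u\in C^0(H^4(\R),[0,T))\cap C^1(H^3(\R),[0,T))$ together with the Sobolev embedding $H^2(\R)\hookrightarrow C^1(\R)$, one checks that $m=u-u_{xx}$ is $C^1$ in $x$, that $m_x=u_x-u_{xxx}$ is continuous, and that $m_t=u_t-u_{txx}$ is continuous (using $u_t\in H^3(\R)$). Hence \eqref{1.0.6} holds as an identity between continuous functions, and in particular pointwise on $\Omega$.

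Next I would exploit the contradiction hypothesis. If $m\equiv c$ on $\Omega$, then $m_x$ and $m_t$ vanish identically there, being derivatives of a locally constant function. Substituting $m_x=0$, $m_t=0$ and $m=c$ into \eqref{1.0.6} collapses the equation to the single relation $c\,u_x=0$ on $\Omega$. Since $u_x\neq 0$ at every point of $\Omega$ by hypothesis, this forces $c=0$, contradicting $c\neq 0$, which proves the claim.

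The computation itself is short; the only point requiring genuine care is the regularity that legitimizes evaluating \eqref{1.0.6} pointwise and differentiating $m$ on $\Omega$, which is precisely where the function-space hypothesis enters. As an alternative global route, if one instead assumes $m\equiv c$ on all of $\R\times[0,T)$, then $u-u_{xx}=c$ is a linear ODE in $x$ whose general solution is $u=c+A(t)e^{x}+B(t)e^{-x}$; the requirement $u(\cdot,t)\in H^{4}(\R)$ forces $A=B=0$ and hence $c=0$, again ruling out a nonzero constant. In either approach, the role of the non-triviality hypothesis is to exclude from the outset the degenerate case $c=0$, i.e.\ $u\equiv 0$.
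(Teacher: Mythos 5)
Your proof is correct and follows essentially the same route as the paper: substitute $m\equiv c$ into \eqref{1.0.6}, note $m_t=m_x=0$, and conclude $cu_x=0$ on $\Omega$, which is incompatible with $u_x\neq 0$ and $c\neq 0$. The extra regularity bookkeeping and the alternative ODE argument are fine additions but do not change the substance of the argument.
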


\begin{proof}
    Suppose $m=c\neq 0$ is a constant on some open and non-empty set $\Omega$, then necessarily $m\neq 0$ and \eqref{1.0.6} implies \eqref{2.0.11}. On the other hand, if $u$ is a solution of \eqref{1.0.6} such that $u_x=0$ on an open set $\Omega$, then $u_x=u_{xx}=u_{xxx}=u_{txx}=0$ and $u_t=0$ on $\Omega$, which, once substituted into \eqref{1.0.6}, implies $u(x,t)=c$.
\end{proof}

\begin{theorem}[{\cite[Theorem 2.2]{freire-ch}}]\label{teo3.1}
    Let $u_0\in H^4(\mathbb{R})$ be a non-trivial initial datum and $u$ be the corresponding solution of the CH equation subject to $u(x,0)=u_0(x)$. Then there exists a strip ${\cal S}=\mathbb{R}\times(0,T)$, uniquely defined by $u_0$, such that:
    \begin{enumerate}
        \item the one-forms \eqref{2.0.8} are defined on ${\cal S}$;
        \item there exist at least two open and disjoint discs endowed with the structure of a $C^1$ PSS modelled by $C^{0}(H^{4}(\mathbb{R}),[0,T))\cap C^{1}(H^{3}(\mathbb{R}),[0,T))$.
    \end{enumerate}
\end{theorem}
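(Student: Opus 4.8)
The plan is to combine the local well-posedness theory for the CH equation with the elementary geometry of the one-forms \eqref{2.0.8}. First I would invoke well-posedness of \eqref{1.0.6} in $H^4(\R)$: for a non-trivial $u_0\in H^4(\R)$ there is a unique maximal solution $u\in C^0(H^4(\R),[0,T))\cap C^1(H^3(\R),[0,T))$, where $T=T(u_0)\in(0,\infty]$ is the maximal existence time. Because the solution map is well-defined, $T$, and hence the strip ${\cal S}=\R\times(0,T)$, is uniquely determined by $u_0$; this already yields the uniqueness assertion and identifies the modelling space ${\cal B}=C^0(H^4(\R),[0,T))\cap C^1(H^3(\R),[0,T))$.

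For item (a) I would upgrade this functional-analytic regularity to classical $C^1$ regularity of the coefficients appearing in \eqref{2.0.8}. Those coefficients are polynomials (with $\lambda$-dependent constants) in $u$, $u_x$ and $m=u-u_{xx}$, so it suffices to check that $u$, $u_x$ and $m$ are $C^1$ in $(x,t)$ on ${\cal S}$. The Sobolev embeddings $H^4(\R)\hookrightarrow C^3(\R)$ and $H^3(\R)\hookrightarrow C^2(\R)$, together with $u\in C^0(H^4)$ and $u_t\in C^0(H^3)$, give that $u,u_x,u_{xx},u_{xxx}$ and $u_t,u_{tx},u_{txx}$ are all continuous on ${\cal S}$; consequently $m_x=u_x-u_{xxx}$ and $m_t=u_t-u_{txx}$ are continuous, so $m$ is $C^1$, and likewise $u,u_x$. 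The care point is joint continuity in $(x,t)$, which follows because $t\mapsto u(\cdot,t)$ and $t\mapsto u_t(\cdot,t)$ are continuous into the respective Sobolev spaces and the embeddings are continuous. Example \ref{ex2.2} then shows that the structure equations \eqref{2.0.3} hold identically on solutions of \eqref{1.0.6} (the right-hand sides of \eqref{2.0.9} vanish), so \eqref{2.0.8} are well-defined $C^1$ one-forms on ${\cal S}$ satisfying \eqref{2.0.3}.

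For item (b) I would use $\omega_1\wedge\omega_2=-\big(\tfrac{\lambda}{2}+\tfrac{1}{2\lambda}-m\big)u_x\,dx\wedge dt$ from Example \ref{ex2.2}, exploiting the freedom in $\lambda$. Since $m_0=u_0-u_{0,xx}\in H^2(\R)\hookrightarrow L^\infty(\R)$, I choose $\lambda>0$ so large that $c_\lambda:=\tfrac{\lambda}{2}+\tfrac{1}{2\lambda}>\|m_0\|_{L^\infty}$, whence $c_\lambda-m>0$ at every point of the initial slice. As $u_0\in H^4(\R)$ is non-trivial, it is continuous, tends to $0$ at $\pm\infty$, and is non-constant, so it attains a nonzero global extremum; on each side of that extremum $u_{0,x}$ is nonzero on an interval. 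Picking $x_1<x_2$ in two such disjoint intervals gives $\omega_1\wedge\omega_2\big|_{(x_i,0)}\neq0$ for $i=1,2$. By the continuity from (a), the coefficient $(c_\lambda-m)u_x$ is nonzero on neighborhoods of $(x_1,0)$ and $(x_2,0)$ in $[0,T)\times\R$, which meet the open strip; shrinking them produces two disjoint open discs $D_1,D_2\subset\R\times(0,T)$ on which $\omega_1\wedge\omega_2\neq0$. On each $D_i$ the three requirements of Definition \ref{def3.1} hold — the $f_{ij}$ are $C^1$, the structure equations \eqref{2.0.3} are satisfied on CH solutions, and $\omega_1\wedge\omega_2\neq0$ — so each $D_i$ carries the structure of a $C^1$ PSS modelled by ${\cal B}$.

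I expect the main obstacle to be not the geometry, which is elementary once continuity is in hand, but the passage from the Sobolev-class solution to genuinely $C^1$ one-forms, including joint continuity in $(x,t)$ and the behaviour up to the initial line $t=0$ used to place the discs. The large-$\lambda$ choice conveniently removes any need to track the level set $\{m=c_\lambda\}$; alternatively, Lemma \ref{lema3.1} guarantees that for a non-trivial solution $\omega_1\wedge\omega_2$ cannot vanish on any open set — neither $u_x$ nor $c_\lambda-m$ can vanish on an open set, the latter because $c_\lambda\neq0$ — so the non-degeneracy region is open and dense in ${\cal S}$ and two disjoint discs exist in any case.
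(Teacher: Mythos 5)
Your part (a) follows the same route as the paper: local well-posedness from Constantin--Escher gives the unique lifespan $T$ and the solution class $C^{0}(H^{4}(\R),[0,T))\cap C^{1}(H^{3}(\R),[0,T))$, and Sobolev embedding upgrades this to joint $C^1$ regularity of $u$, $u_x$, $m$, hence of the coefficients of \eqref{2.0.8}; your treatment of joint continuity is in fact more explicit than the paper's. For part (b), however, you take a genuinely different route, and it proves a weaker statement than the one the paper establishes. The one-forms \eqref{2.0.8} carry an arbitrary fixed parameter $\lambda\in\R\setminus\{0\}$ (Example \ref{ex2.2}), and the paper's proof works for \emph{every} such $\lambda$: it uses conservation of the $H^1(\R)$-norm plus decay at infinity to show that, at each time slice, $u_x$ attains a strictly negative minimum and a strictly positive maximum (the claim $h(t)<0$), producing two disjoint balls on which $u_x\neq 0$ with opposite signs, and then invokes Lemma \ref{lema3.1} to rule out $m\equiv\frac{\lambda}{2}+\frac{1}{2\lambda}$ on each ball, shrinking to get the two discs. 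Your argument instead fixes $\lambda$ \emph{a posteriori}, large enough that $c_\lambda>\|m_0\|_{L^\infty}$, which indeed sidesteps Lemma \ref{lema3.1} — but at the price of changing the quantifier: you prove "there exists a (sufficiently large) $\lambda$ for which two discs exist," not "for each fixed $\lambda\neq0$ two discs exist." Since the discs produced by the paper's proof depend on $\lambda$ but the conclusion holds for all admissible $\lambda$, your version genuinely loses content; to recover the full statement you would need exactly the paper's combination of the sign change of $u_x$ with Lemma \ref{lema3.1}.

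There is also a concrete error in your fallback argument. You assert that Lemma \ref{lema3.1} guarantees that "neither $u_x$ nor $c_\lambda-m$ can vanish on an open set," so that the non-degeneracy region is open and dense in ${\cal S}$. That is not what the lemma says. The lemma forbids $m$ from being a nonzero constant on an open set \emph{on which} $u_x\neq0$; it does not forbid $u_x$ from vanishing identically on an open set. If $u_x\equiv0$ on an open set, the argument in the lemma's proof only yields that $u$ is locally constant there — which is not a contradiction with global non-triviality of $u$, and indeed this is precisely the degenerate scenario admitted by Theorem \ref{teo3.2}(a), where $\omega_1\wedge\omega_2$ vanishes on a proper open subset of ${\cal S}$. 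So density of the set $\{\omega_1\wedge\omega_2\neq0\}$ does not follow from anything in the paper, and your "two disjoint discs exist in any case" conclusion along that route is unsupported. Your main (large-$\lambda$) argument is sound as far as it goes — including the care taken to place the discs inside the open strip starting from points on the initial slice — but the alternative should be dropped, and the $\lambda$-quantifier issue addressed.
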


\begin{proof}
    Let $u_0\in H^4(\mathbb{R})$. From \cite[Proposition 2.7]{const1998-1} with $s=4$ we can find a number $T>0$, uniquely determined by $u_0$, and determine a unique function \[u\in C^{0}(H^{4}(\mathbb{R}),[0,T))\cap C^{1}(H^{3}(\mathbb{R}),[0,T))\] that is a solution of \eqref{1.0.5}. The lifespan $T$ determined by the initial datum uniquely defines the strip mentioned in the statement of the theorem, whereas the solution $u$, granted by \cite[Proposition 2.7]{const1998-1}, ensures that \eqref{2.0.9} are $C^1$ one-forms defined on ${\cal S}$.

    Since $u\in C^{0}(H^{4}(\mathbb{R}),[0,T))\cap C^{1}(H^{3}(\mathbb{R}),[0,T))$, we have $u_x\in H^3(\mathbb{R})$ for each fixed $t$ in $[0,T)$. Moreover, since the $H^1(\mathbb{R})$-norm of the solutions of the CH equation is invariant, and the fact that $u_0$ is non-trivial, implies that for any value of $t$ for which the solution exists, we have
    \begin{equation}\label{3.0.1}
    \|u(\cdot,t)\|_{H^1(\mathbb{R})}=\|u_0(\cdot)\|_{H^1(\mathbb{R})}>0,
    \end{equation}
    telling us that
    \begin{equation}\label{3.0.2}
    \lim_{|x|\rightarrow\infty}u(x,t)=0\quad\text{and}\quad\lim_{|x|\rightarrow\infty}u_x(x,t)=0.
    \end{equation}
    As a result, for each fixed $t\in[0,T)$, the function $x\mapsto u_x(x,t)$ has a minimum and a maximum. Let $a_t$ and $b_t$, with $a_t<b_t$, be points where these extrema are achieved (note that they are not necessarily unique). Let $I(t):=u_x(a_t,t)$, $S(t):=u_x(b_t,t)$ and $h(t):=I(t)S(t)$.

    \textbf{Claim.} We claim that $h(t)<0$.

    Let $a_t$ and $b_t$ be the distinct points we found earlier. Without loss of generality, we may assume $u_x(a_t,t)<0<u_x(b_t,t)$. By continuity, we can find $\epsilon>0$ such that $B_\epsilon(a_t,t)$ and $B_\epsilon(b_t,t)$ are disjoint, their union is contained in ${\cal S}$, and
    $$u_x\Big|_{B_\epsilon(b_t,t)}>0\quad\text{and}\quad u_x\Big|_{B_\epsilon(a_t,t)}<0.$$

    By Lemma \ref{lema3.1}, $m$ cannot be constant on $B_\epsilon(b_t,t)$. As a result, for some $(y_0,t_0)$ in $B_\epsilon(b_t,t)$ we have $m(y_0,t_0)\neq \frac{\lambda}{2}+\frac{1}{2\lambda}$. Then, by continuity, we can find $\epsilon'\in(0,\epsilon]$ such that $m(x,t)\neq \frac{\lambda}{2}+\frac{1}{2\lambda}$, for all $(x,t)\in B_{\epsilon'}(y_0,t_0)\subseteq B_\epsilon(b_t,t)$. Let $B_1:=B_{\epsilon'}(y_0,t_0)$. The same argument shows the existence of another disc $B_2\subseteq B_\epsilon(a_t,t)$ where $m\neq \frac{\lambda}{2}+\frac{1}{2\lambda}$.

    Therefore, by the first part of the theorem, the one-forms \eqref{2.0.8} are defined on these discs, while the conditions above say that $\omega_1\wedge\omega_2\big|_{B_1}\neq0$, $\omega_1\wedge\omega_2\big|_{B_2}\neq0$, and they do not vanish at any point of $B_1\cup B_2$.

    Now, we prove the claim by showing that we cannot have $h(t)\geq0$. If we had $h(t_0)\geq0$ for some $t_0\in[0,T)$, then either $u_x(x,t_0)\geq0$ or $u_x(x,t_0)\leq0$ for all $x\in\mathbb{R}$, meaning that $u$ is either non-decreasing or non-increasing. Therefore, \eqref{3.0.2} implies $u(\cdot,t_0)\equiv0$, which contradicts \eqref{3.0.1}.
\end{proof}

We have just shown that any non-trivial initial datum defines uniquely a strip ${\cal S}$ containing open sets that can be endowed with the structure of a PSS. Our task now is to look for singularities of the metric defined by the forms \eqref{2.0.8}. By a singularity, we mean a point $p$ for which the metric is singular, in the sense that it is no longer a positive definite bilinear form. This situation occurs whenever the one-forms $\omega_1$ and $\omega_2$ are not linearly independent at $p$, that is, $\omega_1\wedge\omega_2\big|_p=0$.

\begin{theorem}[{\cite[Theorem 2.3]{freire-ch}}]\label{teo3.2} 
    Assume that $u_0\in H^4(\mathbb{R})$ is a non-trivial initial datum, $u$ the corresponding solution of \eqref{1.0.5}, and let ${\cal S}$ be the strip determined in Theorem \ref{teo3.1}.

    \begin{enumerate}
        \item If there exists some open set $\Omega\subseteq{\cal S}$ such that $\omega_1\wedge\omega_2\big|_\Omega=0$, then necessarily $u(x,t)=c$ for some $c\in\mathbb{R}$, $(x,t)\in\Omega$, and $\Omega$ is a proper subset of ${\cal S}$;
        \item For each $t\in(0,T)$, there exists at least one point $c_t\in\mathbb{R}$ such that $\omega_1\wedge\omega_2\big|_{(c_t,t)}=0$.
    \end{enumerate}
\end{theorem}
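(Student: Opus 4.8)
The plan is to reduce everything to the explicit expression for $\omega_1\wedge\omega_2$ computed in Example \ref{ex2.2}, namely
$$\omega_1\wedge\omega_2=-\Big(\f{\lambda}{2}+\f{1}{2\lambda}-m\Big)u_x\,dx\wedge dt,$$
so that at a given point the wedge vanishes exactly when \eqref{2.0.10} or \eqref{2.0.11} holds there. A preliminary observation I would isolate is that $\f{\lambda}{2}+\f{1}{2\lambda}=\f{\lambda^2+1}{2\lambda}\neq0$ for every $\lambda\in\R\setminus\{0\}$; this is the feature that lets Lemma \ref{lema3.1} bite in part (a).

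For part (a), I would suppose $\omega_1\wedge\omega_2\equiv0$ on an open set $\Omega\subseteq{\cal S}$ and consider the subset $V:=\{p\in\Omega:u_x(p)\neq0\}$, which is open by continuity of $u_x$. On $V$ the vanishing of the wedge forces $m=\f{\lambda}{2}+\f{1}{2\lambda}$, a nonzero constant; if $V$ were nonempty it would contain a small ball on which $u_x\neq0$ while $m$ equals a nonzero constant, contradicting Lemma \ref{lema3.1}. Hence $V=\emptyset$, i.e. $u_x\equiv0$ on $\Omega$, and the computation already recorded in the proof of Lemma \ref{lema3.1} (where $u_x=u_{xx}=u_{xxx}=u_{txx}=0$ together with \eqref{1.0.6} yields $u_t=0$) gives $u\equiv c$ on $\Omega$. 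To show $\Omega$ is proper, I would note that $\Omega={\cal S}$ would make $u(\cdot,t)\equiv c$ for each $t$; then \eqref{3.0.2} forces $c=0$, whence $\|u(\cdot,t)\|_{H^1(\R)}=0$, contradicting \eqref{3.0.1} and the non-triviality of $u_0$.

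For part (b), I would fix $t\in(0,T)$ and use that $u(\cdot,t)\in H^4(\R)$ makes $x\mapsto u_x(x,t)$ continuous. By the Claim established inside the proof of Theorem \ref{teo3.1} we have $u_x(a_t,t)<0<u_x(b_t,t)$, so the intermediate value theorem produces a point $c_t$ between $a_t$ and $b_t$ with $u_x(c_t,t)=0$. (Equivalently, \eqref{3.0.2} together with non-triviality forces $x\mapsto u(x,t)$ to attain an interior extremum, again yielding a zero of $u_x$.) Condition \eqref{2.0.11} then holds at $(c_t,t)$, so $\omega_1\wedge\omega_2\big|_{(c_t,t)}=0$, as required.

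I expect the delicate point to be part (a): the crux is passing to the open subset where $u_x\neq0$ \emph{before} invoking Lemma \ref{lema3.1}, and recognizing that $\f{\lambda}{2}+\f{1}{2\lambda}$ is genuinely nonzero so that the hypothesis $m=c\neq0$ of the lemma is met. Part (b) is comparatively routine, being an intermediate-value argument resting on the sign change of $u_x$ already secured in Theorem \ref{teo3.1}.
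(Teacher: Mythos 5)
Your proof is correct and follows essentially the same route as the paper's: the pointwise dichotomy \eqref{2.0.10}/\eqref{2.0.11} coming from the wedge product computed in Example \ref{ex2.2}, Lemma \ref{lema3.1} to rule out $m$ being the nonzero constant $\f{\lambda}{2}+\f{1}{2\lambda}$ where $u_x\neq0$, the intermediate value theorem applied to the points $a_t,b_t$ from Theorem \ref{teo3.1} for part (b), and \eqref{3.0.1}--\eqref{3.0.2} to show $\Omega$ is proper. If anything, your handling of part (a) via the open set $V=\{p\in\Omega: u_x(p)\neq0\}$ is slightly more careful than the paper's, which implicitly treats the two alternatives as holding on all of $\Omega$ rather than pointwise.
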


\begin{proof}
    From Example \ref{ex2.2}, $u$ is a non-generic solution of the CH equation provided that $u_x=0$ or $m=const.$ For convenience, let us consider the CH equation in the form \eqref{1.0.6}. If we assume that \eqref{2.0.10} holds on some open and non-empty set $\Omega$, then necessarily $m\neq0$ and \eqref{1.0.6} implies \eqref{2.0.11}. On the other hand, Lemma \ref{lema3.1} implies that if $m$ is constant on some open set, then $u(x,t)$ coincides with $m$ within the same open set.

    Let us show the existence of at least one point $c_t$ such that $u_x(c_t,t)=0$ for each $t\in[0,T)$. Let $h$, $a_t$, and $b_t$ be as in the proof of Theorem \ref{teo3.1}. The two points $a_t$ and $b_t$ cannot be the same, and the intermediate value theorem ensures the existence of $c_t\in(a_t,b_t)$ such that $u_x(c_t,t)=0$.

    Finally, let us prove that $\Omega$ must be a proper subset of ${\cal S}$. In fact, if we had $\Omega={\cal S}$, then we would be forced to conclude that $c=0$ from \eqref{3.0.2}, which would then lead to a contradiction with \eqref{3.0.1}.
\end{proof}

The geometric phenomena described in the Theorems 3.1 and 3.2 are not particular aspects related to the CH equation. They also appear, for instance, in the study of the Degasperis-Procesi equation and its connections with PSS surfaces, see \cite{freire-dp}. 

\section{Further remarks}

\begin{remark}
The investigation of PSS equations essentially began with the papers by Sasaki \cite{sasaki} and Chern and Tenenblat \cite{chern}. Both papers were motivated by the geometric nature of certain integrable equations in the sense described in \cite{akns}.  
\end{remark}

\begin{remark}
Chern and Tenenblat \cite{chern}, although motivated by Sasaki \cite{sasaki}, started systematic geometric studies of integrable systems from the point of view of two-dimensional Riemannian manifolds. Consequently, nowadays, we know that PSS equations are not only restricted to AKNS equations. In fact, equations without any integrable properties can be PSS equations, e.g., see \cite{keti2015} for some examples of non-integrable equations that are PSS equations. 
\end{remark}

\begin{remark}
E. Reyes has made fundamental contributions to the study of PSS equations and integrable systems \cite{benson,reyes2000jmp,reyes2000,reyes2002,reyes2006-sel,reyes2006-jde,reyes2011}. In \cite{reyes2006-sel} the connections between PSS equations and integrability are explored, while in \cite{reyes2011} a nice review on PSS in light of the legacy of Chern and Tenenblat contributions is presented. 
\end{remark}
 
\begin{remark}
Reyes proved that the CH equation is geometrically integrable \cite{reyes2002}, which is the starting point for the results established in \cite{freire-ch}, bringing qualitative information to the surface from the properties of the initial data of Cauchy problems involving the CH equation. 
\end{remark}

\begin{remark}
A topic not treated in this paper is the question of immersion, into the Euclidean space, of the abstract surfaces determined by the solutions of PSS equations. It is well known that PSS can be (locally) immersed in the three-dimensional space. In particular, there are works studying the immersion of surfaces determined by equations, see \cite{tarcisio,freire-dp,kah-book, kah-cag, kah,reyes2000jmp} and references therein. 
\end{remark}

\begin{remark}
The results reported in \cite{tarcisio} say that the immersion of a PSS determined by a generic solution of the CH equation must depend on an infinite number of derivatives of the solution. However, the results established in \cite{freire-ch}, see also \cite{nilay,freire-dp,freire-arxiv} tell us that the manifold may have a metric with finite regularity (see Theorem \ref{teo3.1}). This is, however, not a contradiction. In \cite{tarcisio} it is assumed that the solutions are $C^\infty$ and then, the natural {\it locus} for all the involved functions and forms are the jet space, eventually an infinite jet space. The results in \cite{freire-ch} are obtained in a context of finite jets, meaning that a second fundamental form cannot have arbitrary derivatives. In particular, the results in \cite{freire-ch} open doors for us to investigate second fundamental forms having a different dependence on the field variables. 
\end{remark}

\begin{remark}
From a topological viewpoint, all surfaces in Figure \ref{fig1} possess singularities. On the other hand, the results established in \cite{freire-ch,freire-dp} made possible the ``discovery'' of a new kind of singularity: the blow-up of the metric. It is not clear to the authors whether the topological singularity of the surfaces in Figure \ref{fig1} might be somewhat related to the blow up of the metric of the surfaces described by the solutions of the Camassa-Holm or Degasperis-Procesi equations experiencing wave-breaking. This is a point that should be better clarified in the future.
\end{remark}

\section{Discussion}

The development of the theory of equations describing pseudospherical surfaces, as traced in this review, illustrates a profound and dynamic synergy between differential geometry and non-linear analysis. It was brought to light due to the elegant structure of the AKNS system \cite{akns}, which played a vital role in the foundational works of Sasaki \cite{sasaki} and Chern–Tenenblat \cite{chern} that aimed to provide a geometric framework for the (at that time) emerging theory of integrable systems. Over time, however, the scope of the theory has expanded significantly. It is now widely recognized that the property of describing a pseudospherical surface is not confined to integrable equations. As shown by many works developed by Tenenblat and co-workers, e.g., see \cite{tarcisio}, certain differential equations possess this geometric attribute without admitting a Lax pair or soliton solutions, demonstrating that the class of PSS equations is larger than the class of integrable equations.

The contributions of Reyes \cite{reyes2000jmp,reyes2000,reyes2002,reyes2006-sel,reyes2006-jde,reyes2011,benson} have been pivotal in refining the classification of PSS equations and clarifying the relationship between geometry and integrability, including extrinsic aspects \cite{reyes2000jmp,benson}. His introduction of the concept of geometric integrability provided a key distinction: it isolates those PSS equations whose associated pseudospherical structure depends essentially on a non-removable parameter. Such a parameter bridges the general geometric definition with the analytical machinery of the Inverse Scattering Transform, offering a geometric explanation for the presence of infinite hierarchies of conservation laws and Bäcklund transformations, which emerge from isospectral deformations of the surface.

More recently, a gap between the analysis of PDEs and geometry has been addressed by recent contributions of the author. A central issue addressed by recent research, and highlighted in this review, is the question of the regularity of solutions of PSS equations and their impact on the geometry determined by them. Classical formulations \cite{chern,tarcisio,reyes2002} were set assuming $C^\infty$ solutions, meaning that the differential forms are defined on infinite-order jet spaces—a natural assumption for classical soliton equations. However, the study of wave-breaking phenomena in models such as the CH equation requires moving beyond this smooth paradigm. In these models, solutions remain continuous while their derivatives become unbounded in finite time (breaking time), a behavior not captured by the infinite jets setting where the theory originally proposed by Chern and Tenenblat \cite{chern} was built. The results in \cite{freire-ch,freire-dp} show that the pseudospherical structure remains well-defined even for metrics of finite regularity, effectively transitioning the theory from infinite to finite jet spaces. This adaptation is not a contradiction but an essential evolution, enabling the geometric description of singular dynamics characteristic of peakon equations.

It is not yet clear to the author if the modifications introduced in Definition~\ref{def2.1} and Definition~\ref{def2.2} are sufficient for studying more complex and challenging solutions of equations like the CH. An example is the peakon solutions: given that they are distributional solutions of a non-local evolution for of the CH equation, they do not describe any surface in the sense of Definition \ref{def2.1}. On the one hand, this shows an essential limitation of these recently introduced innovations. On the other hand, the requirement of finite jets in the definition of PSS tells us that we can expect issues when the problem of immersions is studied in conjunction with Cauchy problems, at least for CH type equations. This is a topic to be better understood in the field.

\section{Conclusion}

In this survey, we revisited the notions of PSS equations from its roots, based on the theory of integrable systems, passing through the fundamental contributions by Tenenblat and Reyes, until recent works of the author, who have incorporated tools of qualitative theory of PDEs in the study of PSS.

{\small
    
}


\begin{thebibliography}{10}
        \bibitem{akns}
        M.~J. Ablowitz, D.~J. Kaup, A.~C. Newell, and H.~Segur.
        \newblock Nonlinear-evolution equations of physical significance.
        \newblock {\em Phys. Rev. Lett.}, 31:125--127, 1973.
        
        \bibitem{beals}
        R.~Beals, M.~Rabelo, and K.~Tenenblat.
        \newblock Bäcklund transformations and inverse scattering solutions for some
          pseudospherical surface equations.
        \newblock {\em Stud. Appl. Math.}, 81:125--151, 1989.
        
        \bibitem{benson}
        A.~Benson, A.~Hidalgo, and E.~G. Reyes.
        \newblock On equations of {L}und-{R}egge type.
        \newblock {\em J. Math. Anal. Appl.}, 550:paper 129596, 2025.
        
        \bibitem{cat}
        J.~A. Cavalcante and K.~Tenenblat.
        \newblock Conservation laws for nonlinear evolution equations.
        \newblock {\em J. Math. Phys.}, 29:1044--1049, 1988.
        
        \bibitem{chern}
        S.~S. Chern and K.~Tenenblat.
        \newblock Pseudospherical surfaces and evolution equations.
        \newblock {\em Stud. Appl. Math.}, 74:55--83, 1986.
        
        \bibitem{cle}
        J.~N. Clelland.
        \newblock {\em From Frenet to Cartan: the method of moving frames}.
        \newblock AMS, 2017.
        
        \bibitem{const2000-1}
        A.~Constantin.
        \newblock Existence of permanent and breaking waves for a shallow water
          equation: a geometric approach.
        \newblock {\em Ann. Inst. Fourier}, 50:321--362, 2000.
        
        \bibitem{const1998-1}
        A.~Constantin and J.~Escher.
        \newblock Global existence and blow-up for a shallow water equation.
        \newblock {\em Annali Sc. Norm. Sup. Pisa.}, 26:303--328, 1998.
        
        \bibitem{CE}
        A.~Constantin and J.~Escher.
        \newblock Global existence and blow-up for a shallow water equation.
        \newblock {\em Ann. Scuola Norm. Sup. Pisa Cl. Sci.}, 26:303--328, 1998.
        
        \bibitem{const1998-2}
        A.~Constantin and J.~Escher.
        \newblock Wave breaking for nonlinear nonlocal shallow water equations.
        \newblock {\em Acta Math.}, 181:229--243, 1998.
        
        \bibitem{raspa-jde}
        P.~L. da~Silva, I.~L. Freire, and N.~S. Filho.
        \newblock An integrable pseudospherical equation with pseudo-peakon solutions.
        \newblock {\em J. Diff. Equ.}, 419:291--323, 2025.
        
        \bibitem{carmo}
        M.~P. do~Carmo.
        \newblock {\em Differential Geometry of Curves and Surfaces}.
        \newblock Prentice-Hall, 2016.
        
        \bibitem{drazin}
        P.~G. Drazin and R.~S. Johnson.
        \newblock {\em Solitons: An Introduction}.
        \newblock 2ª ed. Cambridge University Press, 1989.
        
        \bibitem{nazime}
        N.~S. Filho and I.~L. Freire.
        \newblock Structural and qualitative properties of a geometrically integrable
          equation.
        \newblock {\em Commun. Nonlin. Sci. Num. Simul.}, 114(1):paper 106668, 2022.
        
        \bibitem{freire-cm}
        I.~L. Freire.
        \newblock A look on some results about {C}amassa-{H}olm type equations.
        \newblock {\em Commun. Math.}, 29:115--130, 2021.
        
        \bibitem{freire-ch}
        I.~L. Freire.
        \newblock Breakdown of pseudospherical surfaces determined by the
          {C}amassa-{H}olm equation.
        \newblock {\em J. Diff. Equ.}, 378:339--359, 2023.
        
        \bibitem{freire-arxiv}
        I.~L. Freire.
        \newblock Intrinsic geometry and wave-breaking phenomena in solutions of the
          {C}amassa-{H}olm equation, 2025.
        
        \bibitem{freire-dp}
        I.~L. Freire.
        \newblock Local isometric immersions and breakdown of manifolds determined by
          {C}auchy problems involving the {D}egasperis-{P}rocesi equation.
        \newblock {\em J. Nonlinear Sci.}, 35:paper 3, 2025.
        
        \bibitem{freire-tito-sam}
        I.~L. Freire and R.~S. Tito.
        \newblock A {N}ovikov equation describing pseudospherical surfaces, its
          pseudo-potentials, and local isometric immersions.
        \newblock {\em Studies Appl. Math.}, 148:758--772, 2022.
        
        \bibitem{gardprl}
        C.~S. Gardner, J.~M. Greene, M.~D. Kruskal, and R.~Miura.
        \newblock Method for solving the {K}orteweg-de {V}ries equation.
        \newblock {\em Phys. Rev. Lett.}, 19(1):1095--1097, 1967.
        
        \bibitem{gug}
        H.~Guggenheimer.
        \newblock {\em Differential geometry}.
        \newblock Dover, 1977.
        
        \bibitem{kah-book}
        N.~Kahouadji, N.~Kamran, and K.~Tenenblat.
        \newblock Local isometric immersions of pseudo-spherical surfaces and evolution
          equations.
        \newblock In {\em Hamiltonian Partial Differential Equations and Applications},
          page 369–381. Springer, 2015.
        
        \bibitem{kah-cag}
        N.~Kahouadji, N.~Kamran, and K.~Tenenblat.
        \newblock Second-order equations and local isometric immersions of
          pseudo-spherical surfaces.
        \newblock {\em Commun. Anal. Geom.}, 24:605--643, 2016.
        
        \bibitem{kah}
        N.~Kahouadji, N.~Kamran, and K.~Tenenblat.
        \newblock Local isometric immersions of pseudo-spherical surfaces and $k$th
          order evolution equations.
        \newblock {\em Commun. Contemp. Math.}, 21:paper 1850025, 2019.
        
        \bibitem{kasman}
        A.~Kasman.
        \newblock {\em A glimpse of soliton theory}.
        \newblock AMS, 2010.
        
        \bibitem{lax}
        P.~Lax.
        \newblock Integrals of nonlinear equations of evolution and solitary waves.
        \newblock {\em Comm. Pure Applied Math.}, 21:467--490, 1968.
        
        \bibitem{milnor}
        T.~K. Milnor.
        \newblock Efimov's theorem about complete immersed surfaces of negative
          curvatures.
        \newblock {\em Adv. Math.}, 8:472--543, 1972.
        
        \bibitem{miura}
        R.~M. Miura, C.~S. Gardner, and M.~D. Kruskal.
        \newblock Korteweg de {V}ries equation and generalizations. {II}. existence of
          conservation laws and constants of motion.
        \newblock {\em J. Math. Phys.}, 9:1204--1209, 1968.
        
        \bibitem{nilay}
        N.~D. Mutlubas and I.~L. Freire.
        \newblock Existence and uniqueness of periodic pseudospherical surfaces
          emanating from cauchy problems.
        \newblock {\em Proceedings of the Royal Society A}, 480:article number
          20230670, 2024.
        
        \bibitem{neil}
        B.~O'Neill.
        \newblock {\em Elementary differential geometry}.
        \newblock Series2. Academic Press, 2006.
        
        \bibitem{reyes2000jmp}
        E.~G. Reyes.
        \newblock Conservation laws and {C}alapso-{G}uichard deformations of equations
          describing pseudospherical surfaces.
        \newblock {\em J. Math. Phys.}, 41:2968--2989, 2000.
        
        \bibitem{reyes2000}
        E.~G. Reyes.
        \newblock Some geometric aspects of integrability of differential equations in
          two independent variables.
        \newblock {\em Acta Appl. Math.}, 64:75--109, 2000.
        
        \bibitem{reyes2002}
        E.~G. Reyes.
        \newblock Geometric integrability of the {C}amassa-{H}olm equation.
        \newblock {\em Lett. Math. Phys.}, 59:117--131, 2002.
        
        \bibitem{reyes2006-jde}
        E.~G. Reyes.
        \newblock Correspondence theorems for hierarchies of equations of
          pseudospherical type.
        \newblock {\em J. Diff. Equ.}, 225:26--56, 2006.
        
        \bibitem{reyes2006-sel}
        E.~G. Reyes.
        \newblock Pseudo-potentials, nonlocal symmetries and integrability of some
          shallow water equations.
        \newblock {\em Sel. Math.}, 12:241--270, 2006.
        
        \bibitem{reyes2011}
        E.~G. Reyes.
        \newblock Equations of pseudospherical type (after {S}. {S}. {C}hern and {K}.
          {T}enenblat).
        \newblock {\em Results. Math.}, 60:53--101, 2011.
        
        \bibitem{rogers}
        C.~Rogers and W.~K. Schief.
        \newblock {\em Bäcklund and {D}arboux Transformations: Geometry and Modern
          Applications in Soliton Theory}.
        \newblock Cambridge University Press, 2014.
        
        \bibitem{sasaki}
        R.~Sasaki.
        \newblock Soliton equations and pseudospherical surfaces.
        \newblock {\em Nuclear Phys. B}, 154:343--357, 1979.
        
        \bibitem{tarcisio}
        T.~C. Silva and N.~Kamran.
        \newblock Third-order differential equations and local isometric immersions of
          pseudospherical surfaces.
        \newblock {\em Commun. Contemp. Math.}, 18:paper 1650021, 2016.
        
        \bibitem{keti2015}
        T.~C. Silva and K.~Tenenblat.
        \newblock Third order differential equations describing pseudospherical
          surfaces.
        \newblock {\em J. Diff. Equ.}, 259:4897--4923, 2015.
        
        \bibitem{keti-book}
        K.~Tenenblat.
        \newblock {\em Transformations of Manifolds and Applications to Differential
          Equations}.
        \newblock Addison–Wesley/Longman, 1998.
        
        \bibitem{zab}
        N.~J. Zabusky and M.~D. Kruskal.
        \newblock Interactions of ``solitons'' in a collisionless plasma and the
          recurrence of initial states.
        \newblock {\em Phys. Rev. Lett.}, 15:240--243, 1965.
    \end{thebibliography}
\end{document}